\newtheorem{theorem}{Theorem}
\newtheorem{corollary}[theorem]{Corollary}
\def\barr{\begin{array}}
\def\earr{\end{array}}
\title{Breaking points in centralizer lattices}
\author{Marius T\u arn\u auceanu}
\date{January 4, 2018}
\begin{document}

\maketitle

\begin{abstract}
    In this note, we prove that the centralizer lattice ${\mathfrak C}(G)$ of a group $G$ cannot be written 
    as a union of two proper intervals. In particular, it follows that ${\mathfrak C}(G)$ has no breaking 
    point. As an application, we show that the generalized quaternion $2$-groups are not capable.
\end{abstract}

{\small
\noindent
{\bf MSC 2010\,:} Primary 20D30; Secondary 20D15, 20E15.

\noindent
{\bf Key words\,:} breaking point, centralizer lattice, interval, generalized quaternion $2$-group, capable group.}

\section{Introduction}

Let $G$ be a finite group and $L(G)$ be the subgroup lattice of $G$. The starting point for our discussion is given by \cite{2},
where the proper nontrivial subgroups $H$ of $G$ satisfying the condition
$$\text{for every}\hspace{1mm} X\in L(G)\hspace{1mm} \text{we have either}\hspace{1mm} X\leq H\hspace{1mm} \text{or}\hspace{1mm} H\leq X \leqno(1)$$have been studied. Such a subgroup is called a \textit{breaking point} for the lattice $L(G)$, and a group $G$ whose subgroup lattice possesses breaking points is called a \textit{BP-group}. Clearly, all cyclic $p$-groups of order at least $p^2$ are BP-groups. Note that a complete classification of BP-groups can be found in \cite{2}. Also, we observe that the condition (1) is equivalent to $$L(G)=[1,H]\cup[H,G],\leqno(2)$$where for $X,Y\in L(G)$ with $X\subseteq Y$ we denote by $[X,Y]$ the interval in $L(G)$ between $X$ and $Y$. A natural generalization of (2) has been suggested by Roland Schmidt, namely
$$L(G)=[1,M]\cup[N,G] \mbox{ with } 1<M,N<G,\leqno(3)$$and the abelian groups $G$ satisfying (3) have been determined in \cite{1}.

The above concepts can be naturally extended to other remarkable posets of subgroups of $G$, and also to
arbitrary posets. We recall here that the generalized quaternion $2$-groups
$$Q_{2^n}=\langle a,b \mid a^{2^{n-2}}= b^2, a^{2^{n-1}}=1, b^{-1}ab=a^{-1}\rangle, n\geq 3$$can
be characterized as being the unique finite non-cyclic groups whose posets of cyclic subgroups
and of conjugacy classes of cyclic subgroups have breaking points (see \cite{7} and \cite{3}, respectively).

In the current note, we will focus on the centralizer lattice $${\mathfrak C}(G)=\{C_G(H) \,|\, H\in L(G)\}$$of $G$. Note that this is a complete 
meet-sublattice of $L(G)$ with the least element $Z(G)=C_G(G)$ and the greatest element $G=C_G(1)$. We will prove that there are no proper 
centralizers $M$ and $N$ such that ${\mathfrak C}(G)=[Z(G),M]\cup[N,G]$. This implies that ${\mathfrak C}(G)$ does not have breaking points. As an application, we show that $Q_{2^n}$ is not a capable group, i.e. there is no group $G$ with $G/Z(G)\cong Q_{2^n}$ (see e.g. the main theorem of \cite{6}).

Most of our notation is standard and will usually not be repeated here. Elementary concepts and results on group theory 
can be found in \cite{4}. For subgroup lattice notions we refer the reader to \cite{5} .

\section{Main results}

Our main theorem is the following.

\begin{theorem}
    Let $G$ be a group and ${\mathfrak C}(G)$ be the centralizer lattice of $G$. Then ${\mathfrak C}(G)$ cannot be written as ${\mathfrak C}(G)=[Z(G),M]\cup[N,G]$ with $M,N\neq Z(G),G$.
\end{theorem}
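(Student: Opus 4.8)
The plan is to argue by contradiction. Suppose that $\mathfrak{C}(G)=[Z(G),M]\cup[N,G]$ with $M,N$ proper centralizers, i.e. $Z(G)<M<G$ and $Z(G)<N<G$. The decisive remark is that for every $g\in G$ the cyclic centralizer $C_G(\langle g\rangle)=C_G(g)$ is a member of $\mathfrak{C}(G)$, and it always contains $g$ itself. Hence, if $g\notin M$, then $C_G(g)\not\le M$, so $C_G(g)$ cannot lie in the lower interval $[Z(G),M]$ and must therefore lie in the upper one, $C_G(g)\in[N,G]$. This forces $N\le C_G(g)$, equivalently $g\in C_G(N)$.

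Consequently every element of $G$ lies in $M$ or in $C_G(N)$, that is, $G=M\cup C_G(N)$. I would then invoke the classical fact that a group is never the union of two proper subgroups; since $M\neq G$, this yields $C_G(N)=G$, i.e. $N\le Z(G)$. But $N$ is a centralizer, so $Z(G)=C_G(G)\le N$, whence $N=Z(G)$, contradicting $N\neq Z(G)$. This would complete the proof, and taking $M=N$ one recovers immediately that $\mathfrak{C}(G)$ has no breaking point, which is what feeds into the application to $Q_{2^n}$.

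A few points I would be careful about. First, the step "$C_G(g)\notin[Z(G),M]$" uses precisely that $g\in C_G(g)\setminus M$; this is where the hypothesis $M\neq G$ (through the existence of some $g\notin M$) is genuinely used, and the argument correctly collapses when $M=G$. Second, the degenerate conclusion $C_G(N)=G$ must be turned back into $N=Z(G)$, and this is exactly where it matters that $N$ is a centralizer (so that $N\supseteq Z(G)$); note that, by contrast, the argument uses nothing about $M$ beyond $Z(G)\le M<G$. I do not expect a serious technical obstacle here: the only real content is the observation $g\in C_G(g)$, which converts a statement about the lattice $\mathfrak{C}(G)$ into a statement about $G$ as a set-theoretic union of two subgroups. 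The "hard part," such as it is, is spotting that this reduction is available rather than attempting a direct lattice-theoretic analysis of how two proper intervals could cover $\mathfrak{C}(G)$.
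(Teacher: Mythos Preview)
Your proof is correct and follows essentially the same route as the paper: both reduce the putative interval covering of $\mathfrak{C}(G)$ to the set-theoretic equality $G=M\cup C_G(N)$ via $g\in C_G(g)$, and then invoke the impossibility of covering a group by two proper subgroups. The only difference is cosmetic: the paper asserts directly that $C_G(N)$ is proper, whereas you unwind this by showing $C_G(N)=G$ would force $N=Z(G)$; the paper's version uses $x\in C_G(C_G(x))\le C_G(N)$ where you use the equivalent $N\le C_G(g)\Leftrightarrow g\in C_G(N)$.
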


\begin{proof}
   Assume that there are two proper centralizers $M$ and $N$ such that ${\mathfrak C}(G)=[Z(G),M]\cup[N,G]$. Then for every $x\in G$ we have either $C_G(x)\leq M$ or $N\leq C_G(x)$. In the first case we infer that $x\in M$, while in the second one we get $x\in C_G(C_G(x))\leq C_G(N)$, that is $x\in C_G(N)$. Thus, the group $G$ is the union of its proper subgroups $M$ and $C_G(N)$, a contradiction.
\end{proof}

Clearly, by taking $M=N$ in Theorem 1, we obtain the following corollary.

\begin{corollary}
    The centralizer lattice ${\mathfrak C}(G)$ of a group $G$ has no breaking point.
\end{corollary}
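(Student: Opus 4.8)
The plan is to deduce this at once from Theorem~1. First I would fix the meaning of a breaking point of the poset $\mathfrak{C}(G)$: it is an element $P\in\mathfrak{C}(G)$ with $P\neq Z(G)$ and $P\neq G$ such that for every $X\in\mathfrak{C}(G)$ we have $X\leq P$ or $P\leq X$. Since $Z(G)=C_G(G)$ and $G=C_G(1)$ are respectively the least and the greatest element of $\mathfrak{C}(G)$, the same elementary argument used to pass from (1) to (2) in the introduction shows that this condition is equivalent to $\mathfrak{C}(G)=[Z(G),P]\cup[P,G]$.

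Now I would invoke Theorem~1 with $M=N=P$. The decomposition $\mathfrak{C}(G)=[Z(G),P]\cup[P,G]$ is then of the prohibited form $\mathfrak{C}(G)=[Z(G),M]\cup[N,G]$ with $M,N\neq Z(G),G$, which contradicts Theorem~1; hence no breaking point can exist. Equivalently, and self-containedly, one may simply rerun the proof of Theorem~1 in this special case: if $P$ were a breaking point, then for each $x\in G$ the centralizer $C_G(x)$ belongs to $\mathfrak{C}(G)$, so either $C_G(x)\leq P$, giving $x\in C_G(x)\leq P$, or $P\leq C_G(x)$, giving $x\in C_G(C_G(x))\leq C_G(P)$; thus $G=P\cup C_G(P)$, with $P$ proper by hypothesis and $C_G(P)$ proper because $Z(G)$ is strictly contained in $P$ (recall $Z(G)$ lies in every centralizer). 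Since a group is never the union of two proper subgroups, this is absurd.

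I do not expect any real obstacle. The only points worth a line of justification are the equivalence between the breaking-point condition and the interval-union condition, and the remark that $C_G(P)$ is a proper subgroup, which rests on $Z(G)\leq C_G(H)$ for all $H$ together with $P>Z(G)$.
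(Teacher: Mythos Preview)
Your proof is correct and follows exactly the paper's approach: the paper simply states that the corollary is obtained by taking $M=N$ in Theorem~1, which is precisely your main argument. Your additional self-contained version and the extra justifications (the equivalence between the breaking-point condition and the interval decomposition, and the properness of $C_G(P)$) are correct elaborations of what the paper leaves implicit.
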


Next we remark that for an abelian group $G$ we have ${\mathfrak C}(G)=\{G\}$, and also that there is no non-abelian group $G$ with ${\mathfrak C}(G)=\{Z(G),G\}$ (i.e. ${\mathfrak C}(G)$ is not a chain of length $1$). Since chains of length at least $2$ have breaking points, Corollary 2 implies that:

\begin{corollary}
    The centralizer lattice ${\mathfrak C}(G)$ of a group $G$ cannot be a chain of length $\geq 1$. Moreover, ${\mathfrak C}(G)$ is a chain if and only if $G$ is abelian.
\end{corollary}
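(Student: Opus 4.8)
The plan is to deduce both assertions from Corollary 2 together with one elementary remark about centralizers. Recall first that ${\mathfrak C}(G)$ always has $Z(G)$ as least element and $G$ as greatest element. I would begin with the case of a chain of length at least $2$: such a chain contains at least three comparable elements, so in particular there is some $M\in{\mathfrak C}(G)$ with $Z(G)<M<G$. Since every element of a chain is comparable with $M$, we get ${\mathfrak C}(G)=[Z(G),M]\cup[M,G]$, i.e.\ $M$ is a breaking point of ${\mathfrak C}(G)$; this contradicts Corollary 2 (equivalently, Theorem 1 with $N=M$). Note that this step needs no finiteness hypothesis, since a chain of length $\geq 2$ automatically has an interior point.

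It then remains to rule out a chain of length exactly $1$, that is, ${\mathfrak C}(G)=\{Z(G),G\}$ with $Z(G)\neq G$. Here $G$ is non-abelian, so I would pick any $x\in G\setminus Z(G)$ and consider $C_G(x)\in{\mathfrak C}(G)$. On one hand $x\in C_G(x)$ while $x\notin Z(G)$, so $C_G(x)\neq Z(G)$; on the other hand, since $x$ does not commute with all of $G$, we have $C_G(x)\neq G$. Thus $C_G(x)$ is a third element of ${\mathfrak C}(G)$, a contradiction. This establishes the ``moreover'' remark quoted before the statement and, combined with the previous paragraph, shows that ${\mathfrak C}(G)$ is never a chain of length $\geq 1$.

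Finally, for the equivalence: if $G$ is abelian then $C_G(H)=G$ for every $H\in L(G)$, so ${\mathfrak C}(G)=\{G\}$, a trivial chain of length $0$. Conversely, if ${\mathfrak C}(G)$ is a chain, then by the first part it has length $0$, hence consists of a single element; since that element is simultaneously $Z(G)$ and $G$, we conclude $Z(G)=G$, i.e.\ $G$ is abelian. I do not expect a real obstacle here: the only ingredient not already provided by Corollary 2 is the length-$1$ case, and that is handled by the single observation that any non-central element $x$ yields a centralizer $C_G(x)$ lying strictly between $Z(G)$ and $G$.
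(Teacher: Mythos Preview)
Your proposal is correct and follows essentially the same approach as the paper: rule out chains of length $\geq 2$ via Corollary~2 (any interior element of a chain is a breaking point), and handle the length-$1$ case and the abelian equivalence separately. The only difference is that the paper merely asserts, without argument, that no non-abelian $G$ has ${\mathfrak C}(G)=\{Z(G),G\}$, whereas you supply the natural justification via $C_G(x)$ for a non-central $x$.
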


Another consequence of Corollary 2 is:

\begin{corollary}
    The generalized quaternion $2$-groups $Q_{2^n}$, $n\geq 3$, are not capable groups.
\end{corollary}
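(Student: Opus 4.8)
The plan is to argue by contradiction: assume $Q_{2^n}$ is capable and fix a group $G$ with $G/Z(G)\cong Q_{2^n}$. The only feature of $Q_{2^n}$ I will exploit is that it has a unique subgroup of order $2$, namely its center $\langle z\rangle$; equivalently, $z$ lies in every nontrivial cyclic subgroup of $Q_{2^n}$ — which is exactly what makes $\langle z\rangle$ a breaking point of the poset of cyclic subgroups of $Q_{2^n}$. I would fix $w\in G$ with $wZ(G)=z$; since $z\neq 1$ we have $w\notin Z(G)$, hence $C_G(w)\supsetneq Z(G)$, so $C_G(w)$ is a centralizer distinct from $Z(G)$ (and of course $C_G(w)\in{\mathfrak C}(G)$).

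The key step is to show
\[{\mathfrak C}(G)=[Z(G),C_G(w)]\cup[C_G(w),G].\]
Given $H\in L(G)$: if $H\leq Z(G)$ then $C_G(H)=G\in[C_G(w),G]$; otherwise choose $h\in H\setminus Z(G)$, so that $hZ(G)$ is a nontrivial element of $Q_{2^n}$ and hence $z\in\langle hZ(G)\rangle$, which gives $w\in\langle h\rangle Z(G)$. Then every element of $G$ commuting with $h$ commutes with $w$, i.e. $C_G(h)\leq C_G(w)$, so $C_G(H)\leq C_G(h)\leq C_G(w)$ and $C_G(H)\in[Z(G),C_G(w)]$. This proves the displayed identity.

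To finish, I would invoke Corollary 2: it forbids a centralizer $M$ with $M\neq Z(G),G$ and ${\mathfrak C}(G)=[Z(G),M]\cup[M,G]$. Applying this to $M=C_G(w)$, and using $C_G(w)\neq Z(G)$, we are forced to conclude $C_G(w)=G$, i.e. $w\in Z(G)$ — contradicting $wZ(G)=z\neq 1$. Therefore no group $G$ with $G/Z(G)\cong Q_{2^n}$ exists, that is, $Q_{2^n}$ is not capable.

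The main obstacle I anticipate is the verification of the displayed decomposition, and specifically the fact that it must be checked for the centralizer of an \emph{arbitrary} subgroup, not just of a single element; the reduction to a non-central $h\in H$ — so that $\langle hZ(G)\rangle$ is a nontrivial subgroup of $Q_{2^n}$ and therefore contains $z$ — is what makes this go through, and it is precisely here that the unique-involution property of the generalized quaternion groups enters (the argument would collapse for a $2$-group with more than one involution). The remaining ingredients ($Z(G)=C_G(G)$ and $G=C_G(1)$ lie in ${\mathfrak C}(G)$, and $C_G(w)\neq Z(G)$) are immediate.
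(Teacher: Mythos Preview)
Your proof is correct, but it takes a cleaner route than the paper's. The paper lets $H$ be the unique atom of the interval $[Z(G),G]$ (coming from the unique involution of $Q_{2^n}$) and then splits into cases: if $H\in{\mathfrak C}(G)$ it is a breaking point, contradicting Corollary~2; if $H\notin{\mathfrak C}(G)$, the paper argues that $H$ lies in every minimal centralizer, takes their intersection (still a centralizer), and uses minimality to force this intersection down to $Z(G)$, contradicting $H\supsetneq Z(G)$. You instead pick an element $w$ mapping to the involution and show directly that $C_G(w)$ itself is a breaking point, via the observation $w\in\langle h\rangle Z(G)$ for every non-central $h$. Since $C_G(w)$ is a centralizer by construction, no case analysis is needed, and the contradiction with Corollary~2 is immediate. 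Your argument is shorter and isolates more transparently where the unique-involution property is used; the paper's argument is a bit more lattice-theoretic in flavor but has to work harder in the second case.
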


\begin{proof}
   Assume that there is a group $G$ such that $G/Z(G)\cong Q_{2^n}$. Obviously, $G$ is not abelian. Since $Q_{2^n}$ has a unique subgroup of order $2$, it follows that the lattice interval $[Z(G),G]$ contains a unique minimal element, say $H$. If $H\in {\mathfrak C}(G)$, then it is a breaking point of ${\mathfrak C}(G)$, contradicting Corollary 2. If $H\notin {\mathfrak C}(G)$, then it is (properly) contained in all minimal centralizers $M_1$, $M_2$, ..., $M_k$ of $G$, and so $H\subseteq\bigcap_{i=1}^k M_i$. Note that a intersection of centralizers is also a centralizer, that is $\bigcap_{i=1}^k M_i\in {\mathfrak C}(G)$. On the other hand, we have $k\geq 3$ because $G$ is non-abelian. Then $\bigcap_{i=1}^k M_i< M_j$, for any $j=1,2, ..., k$, and therefore $\bigcap_{i=1}^k M_i=Z(G)$ by the minimality of $M_j$'s. Consequently, $H\subseteq Z(G)$, a contradiction.
\end{proof}

Finally, we formulate an open problem concerning the above study.

\bigskip\noindent {\bf Open problem.} Let $G$ be a group. Then ${\mathfrak C}'(G)=\{C_G(H) \,|\, H\unlhd G)\}$ is also a complete
meet-sublattice of $L(G)$ with the least element $Z(G)=C_G(G)$ and the greatest element $G=C_G(1)$. Which are the groups $G$ such that ${\mathfrak C}'(G)$ has breaking points? (note that this can happen, as for $G=S_3$)

\vspace*{5ex}\small

\hfill
\begin{minipage}[t]{5cm}
Marius T\u arn\u auceanu \\
Faculty of  Mathematics \\
``Al.I. Cuza'' University \\
Ia\c si, Romania \\
e-mail: {\tt tarnauc@uaic.ro}
\end{minipage}

\end{document}